\documentclass[12pt,a4paper]{amsart}

\usepackage{latexsym, amsmath, amscd, amssymb, amsthm}
\usepackage[T1,T2A]{fontenc}
\usepackage[utf8]{inputenc}
\textwidth=17cm
\textheight=23.7cm
\voffset=-0.5 cm
\hoffset=-1.4cm
\newtheorem{te}{Theorem}
\newtheorem{pro}{Proposition}

\newtheorem{pr}{Example}
\newtheorem{lemma}{Lemma}
\newtheorem{rem}{Remark}

\DeclareMathOperator{\Der}{Der}
\DeclareMathOperator{\Ker}{Ker}
\DeclareMathOperator{\rk}{rk}
\DeclareMathOperator{\ord}{ord}

\begin{document}

\title{Centralizers of linear and locally nilpotent derivations}

\author{L.~Bedratyuk}
\address{Department of Information Technology,
                Khmelnytsky National University,
                Khmelnytsky, Instytutska ,11,
                29016, Ukraine}
\email{LeonidBedratyuk@khmnu.edu.ua}
\author
{Y.Chapovskyi}

\address{Department of Algebra and Computer Mathematics, Faculty of Mechanics and Mathematics,
Taras Shevchenko National University of Kyiv, 64, Volodymyrska street, 01033  Kyiv, Ukraine}
\email{safemacc@gmail.com}
\author
{ A. Petravchuk}

\address{
Department of Algebra and Computer Mathematics, Faculty of Mechanics and Mathematics,
Taras Shevchenko National University of Kyiv, 64, Volodymyrska street, 01033  Kyiv, Ukraine}
\email{ apetrav@gmail.com , petravchuk@knu.ua}
\date{\today}
\keywords{locally nilpotent derivations, basic Weitzenboeck derivation,  Lie algebra, centralizer, kernel of derivation  }
\subjclass[2000]{Primary 17B66; Secondary 17B05, 13N15}

\begin{abstract}
     Let $K$ be an algebraically closed field of characteristic
     zero, $A = K[x_1,\dots,x_n]$ the polynomial ring,
     $R = K(x_1,\dots,x_n)$ the field of rational functions, and let
     $W_n(K) = \Der_{K}A$ be
     the Lie algebra of all $K$-derivations on $A$. If
     $D \in W_n(K),$ $D\not =0$ is linear (i.e. of the form
     $D = \sum_{i,j=1}^n a_{ij}x_j \frac{\partial}{\partial x_i}$)
     we give a description of the centralizer of $D$ in $W_n(K)$
     and point out an algorithm for finding generators of
     $C_{W_n(K)}(D)$ as a module over the ring of constants
     in case when $D$ is the basic Weitzenboeck derivation. In more
     general case when the ring $A$ is a finitely generated domain over
     $K$ and $D$ is a locally nilpotent derivation on $A,$ we prove that the centralizer
     $C_{{\rm Der}A}(D)$ is a "large" \ subalgebra in
     ${\rm Der}_{K} A$, namely     $\rk_A C_{\Der A}(D) :=
     \dim_R RC_{\Der A}(D)$
     equals ${\rm tr}.\deg_{K}R,$ where $R$ is the field of fraction of the  ring $A.$

\end{abstract}
\maketitle
\section{Introduction}
Let  $\mathbb{K}$ be an algebraically closed field of characteristic zero,
$A = \mathbb{K}[x_1,\dots,x_n]$ the polynomial algebra and
$R = \mathbb{K}(x_1,\dots,x_n)$ the field of rational functions in  $n$ variables. Recall that   a  $\mathbb{K}$-linear map $D:A \to A$  is called a $\mathbb{K}$-derivation of the algebra  $A$ if $D(fg) = D(f)g + fD(g)$ for any  $f,g \in A$. All $\mathbb K$-derivations of the $\mathbb{K}$-algebra  $A$ form the Lie algebra $W_n(\mathbb{K})$ over the field  $\mathbb{K}$ with respect to the commutation and every element $D\in W_n(\mathbb K)$  can be uniquely  written in the form
$D = f_1 \partial_1 + \dots + f_n \partial_n$, where
$f_i \in A$ і $\partial_i := \frac{\partial}{\partial x_i}$ are partial derivatives. Note that every derivation   of the algebra  $A$ can be uniquely extended on the field of fractions  $R=\mathbb K(x_1, \ldots , x_n),$ we will  denote this extension by  the same letter   when no confusion can arise.  The Lie algebra of all $\mathbb K$-derivations of the field $R$ will be denoted by ${\widetilde W}_n(\mathbb{K}),$ elements of this algebra can be uniquely written in  the form
$D=\varphi _1 \partial_1 + \dots + \varphi _n \partial_n, $ where $\varphi _i\in R, i=1, \ldots n.$

The structure of the Lie algebra  $W_n(\mathbb{K})$ is of great interest because, from the geometrical point of view,  $W_n(\mathbb{K})$ is  the Lie algebra of all vector fields on  $\mathbb{K}^n$ with polynomial coefficients; from the viewpoint of differential equations,  any  derivation $D=f_1 \partial_1 + \dots + f_n \partial_n$ can be considered as an  autonomous system of  ordinary differential equations
$$\dot{x} = F(x), \ {\rm where}  \ x=x(t)=(x_1(t), \ldots ,x_n(t)), \
F(x) = (f_1(x),\dots,f_n(x)).$$
For a given  derivation  $D \in W_n(\mathbb{K})$ one can consider its centralizer  $C_{W_n(\mathbb K)}(D)$ in $W_n(\mathbb  K)$, i.e the set of all elements of the Lie algebra $W_n(\mathbb{K})$ that commute with $D$; we will denote it briefly by $C_{W_n}(D)$). The centralizer  $C_{W_n}(D)$ is obviously a subalgebra of the Lie algebra  $W_n(\mathbb{K})$  and  $D\in C_{W_n}(D).$ The problem of describing centralizers of elements $D \in W_n(\mathbb{K})$ is closely related to some problems from theory of differential equations and geometry, it is solved only in some special cases (see for example, \cite{Petien, Nagloo,CEP,Finston, Panyu}).

 The paper is organized as follows. In Sect. 2, we consider centralizers  of linear derivations, i.e. derivations of the form
$$
D = \sum_{i=1}^n f_i \partial_i, \;
f_i = \sum_{j=1}^n a_{ij}x_j, \;
a_{ij} \in \mathbb{K}.
$$
Such a derivation is completely defined  by the matrix
$A = (a_{ij})_{i,j=1}^n$,  the commutator of derivations  $D$
і $D' = \sum_{i,j=1}^n b_{ij} x_j \partial_i$ with the matrix
$B = (b_{ij})$ is determined by the matrix  $[A,B]=AB-BA$. Therefore the Lie algebra of all linear derivations is isomorphic to the general linear Lie algebra
$\mathfrak{gl}_n(\mathbb{K}).$ It will cause no confusion if we use the same notation to these isomorphic  algebras  and  assume that  $\mathfrak{gl}_n(\mathbb{K})\subset W_n(\mathbb K).$

For a linear derivation  $D = \sum_{i,j=1}^n a_{ij}x_j \partial_i$ one can consider two centralizers:
$C_0 = C_{\mathfrak{gl}_n(\mathbb{K})}(D)$ and  $C = C_{W_n(\mathbb{K})}(D)$
(obviously, $C_0 \subseteq C$).
The structure of the centralizer  $C_0$ is well known, since $C_0$ consists of linear derivations determined by matrices that commute with $A = (a_{ij})$.
Description of such matrices was obtained many years ago (see, for example, \cite{Gantmacher}).
It turned out that in some sense the subalgebra  $C$ is near to  $C_0$.
This was proved in Theorem  \ref{Theorem1} which states that
$C_{W_n(\mathbb{K})}(D) =
(FC_{\mathfrak{gl}_n(\mathbb{K})}(D)) \cap W_n(\mathbb{K}),$
where $F$ is the kernel of  $D$ in $R = \mathbb{K}(x_1,\dots,x_n)$
 Unfortunately, this theorem does not allow in general to find  $C_{W_n}(D)$ since the intersection
$(FC_0) \cap W_n(\mathbb{K})$ is very hard to calculate.

   The third section of the paper provides  a method for finding generators of  $C=C_{W_n}(D)$ (as a module over ${\rm Ker}D$)  in the case when the matrix of $D$ is a single Jordan block (Theorem \ref{Th3}). To solve this problem we describe the kernels of  powers $D^n, n\geq 2$  of the Weitzenb$\ddot{\rm o}$ck derivation  under condition that we know the generators of  ${\rm Ker}D$  as a subalgebra of $A$ (Theorem  \ref{Th2}). Note that many papers were devoted to finding  generators of  kernels of locally nilpotent derivations (see, for example, \cite{Bedratyuk}, \cite{Miyan}, \cite{Now_Nag}).

The fourth section  of the paper deals with centralizers of locally nilpotent derivations  $D$ on finitely generated domains  $A$ over an arbitrary field $\mathbb{K}$ of characteristic zero.   In the case, when  $D$ has a slice in $A$, (i.e. there exists $s\in A$ such that  $D(s) = 1$) we prove that
$
C_{\Der_{\mathbb{K}}A}(D) \simeq C^{(-)} \rightthreetimes \Der C,
$
where  $C = \Ker D$ в $A$, $C^{(-)}$  is a vector space over $\mathbb  K$ with zero multiplication, i.e an abelian Lie algebra and elements of  $\Der_{\mathbb{K}}C$
act on  $C^{(-)}$ as derivations of the algebra  $C$ (Proposition   \ref{Proposition3}).
In general case, it was proved for a locally nilpotent derivation $D$ of a domain $A$ over $\mathbb K$ with
${\rm tr}.\deg_{\mathbb{K}}A = n$ that the centralizer  $C_{\Der_{\mathbb{K}} A}(D)$
has rank  $n$ over  $A$ (see Theorem \ref{Theorem3}). Note that centralizers of locally nilpotent derivations were studied in  \cite{Finston} from another point of view.

We use standard notation. If  $L$ is a subalgebra of the Lie algebra
$\Der _{\mathbb{K}}(A)$ and  $R$ the field of fractions of  $A$ then the rank of  $L$ over $A$ is the dimension
$\dim_R RL$; we denote it by $\rk_R L$.
 The index of a derivation  $D\in W_n(\mathbb K)$  is, by definition, the least number of nonzero coefficients in  $D = f_1 \partial_1 + \dots + f_n \partial_n.$
A derivation  $D\in  \Der _{\mathbb K}(A)$ is called locally nilpotent derivation  if for any  $a \in A$ there exists  $k=k(a)$ such that
$D^k(a)=0$. The smallest  natural  $k$ with  $D^k(a) \ne 0$, but
$D^{k+1}(a)=0$ will be called  $D$-order of the element  $a$ and denoted by $\ord_D(a)$.
For the sake of brevity, we will write $C_{W{n}}(D)$ i $ C_{\mathfrak{gl}_{n}}(D),$ without indicating the fixed ground field  $\mathbb K.$

\section{Centralizers of linear derivations}

  Throughout this section,  $A=\mathbb K[x_1, \ldots ,x_n]$, $W_n(\mathbb K)=\Der _{\mathbb K}(A).$

\begin{lemma} \label{l:aut}
     Let  $D \in W_n(\mathbb{K}), \;
          f \in \Ker D$ and let $\varphi$ be an arbitrary automorphism of the ring  $A$.
     Then   $\varphi^{-1}D\varphi \in W_n(\mathbb{K})$ and
     \begin{enumerate}
          \item $\Ker_A (\varphi^{-1}D\varphi) = \varphi^{-1}(\Ker D)$ ;
          \item $\varphi^{-1} (fD) \varphi =
                 \varphi^{-1}(f)\varphi^{-1}D\varphi$ ;
          \item $C_{W_n}(D) =
                 \varphi(C_{W_n}(\varphi ^{-1}D\varphi ))\varphi^{-1}$.
     \end{enumerate}
\end{lemma}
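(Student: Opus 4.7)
The plan is to verify all three assertions by direct computation, relying on the fact that conjugation by a ring automorphism $\varphi$ preserves the derivation property. The first observation is that for any $a,b \in A$,
\[
(\varphi^{-1}D\varphi)(ab) = \varphi^{-1}\bigl(D(\varphi(a)\varphi(b))\bigr) = \varphi^{-1}\bigl(D(\varphi(a))\varphi(b) + \varphi(a)D(\varphi(b))\bigr),
\]
and applying $\varphi^{-1}$ term by term (using that $\varphi^{-1}$ is a ring homomorphism) gives the Leibniz rule for $\varphi^{-1}D\varphi$, so it indeed lies in $W_n(\mathbb{K})$.

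For (1), I would simply chase elements: $a \in \Ker(\varphi^{-1}D\varphi)$ iff $\varphi^{-1}(D(\varphi(a)))=0$ iff $D(\varphi(a))=0$ (since $\varphi^{-1}$ is injective) iff $\varphi(a) \in \Ker D$ iff $a \in \varphi^{-1}(\Ker D)$. For (2), evaluate both sides on an arbitrary $a \in A$: the left side gives $\varphi^{-1}(f \cdot D(\varphi(a))) = \varphi^{-1}(f)\cdot \varphi^{-1}(D(\varphi(a)))$, which is precisely $\varphi^{-1}(f)\cdot (\varphi^{-1}D\varphi)(a)$, the right side. The only ingredient is again multiplicativity of $\varphi^{-1}$.

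For (3), the key remark is that the map $E \mapsto \varphi^{-1}E\varphi$ is a Lie algebra automorphism of $W_n(\mathbb{K})$, since
\[
\varphi^{-1}[E_1,E_2]\varphi = [\varphi^{-1}E_1\varphi,\ \varphi^{-1}E_2\varphi].
\]
Consequently, $E$ commutes with $D$ if and only if $\varphi^{-1}E\varphi$ commutes with $\varphi^{-1}D\varphi$; writing $E = \varphi(\varphi^{-1}E\varphi)\varphi^{-1}$ shows the two-sided inclusion giving (3). There is no real obstacle here: the lemma is bookkeeping about conjugation, and its purpose is evidently to let later arguments replace $D$ by a conjugate (for instance one in Jordan normal form) without loss of generality.
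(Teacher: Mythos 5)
Your proof is correct and is exactly the direct verification the paper has in mind --- the authors simply write ``The proof is straightforward'' and omit the computation, so your element-chasing for (1), evaluation on an arbitrary $a\in A$ for (2), and the observation that $E\mapsto\varphi^{-1}E\varphi$ is a Lie algebra automorphism for (3) supply precisely the missing details. No discrepancy with the paper's approach.
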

\begin{proof}
     The proof is straightforward.
\end{proof}

\begin{lemma} \label{l:com}
     Let
     $D = \sum_{i=1}^n f_i \partial_i, \;
     T = \sum_{i=1}^n g_i \partial_i \; f_i,\,g_i \in A$ be
      derivations from   $W_n(\mathbb{K})$. The derivations $T$ and   $D$
     commute   if and only if
     $D(g_i) = T(f_i),\; i=1,\dots,n$.
\end{lemma}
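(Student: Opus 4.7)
The plan is to reduce the commutation condition to a verification on the generators $x_1,\dots,x_n$ of $A$. The first ingredient is the standard fact that the commutator $[D,T] = DT-TD$ of any two $\mathbb{K}$-derivations of $A$ is again a $\mathbb{K}$-derivation, so $[D,T]$ is uniquely determined by its values on the generators $x_1,\dots,x_n$. This can be proved either directly from the Leibniz rule applied to $[D,T](fg)$, or by expanding $DT$ and $TD$ in the basis $\{\partial_i,\partial_i\partial_j\}$ and noting that the purely second-order parts $f_i g_j \partial_i\partial_j$ cancel symmetrically in $i,j$ thanks to $\partial_i\partial_j = \partial_j\partial_i$.

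The second ingredient is a one-line computation. Since $D(x_k) = f_k$ and $T(x_k) = g_k$, one has
$$[D,T](x_k) \;=\; D(T(x_k)) - T(D(x_k)) \;=\; D(g_k) - T(f_k).$$
Collecting the surviving first-order terms in the expansion of $DT-TD$ more explicitly gives the formula
$$[D,T] \;=\; \sum_{k=1}^n \bigl(D(g_k) - T(f_k)\bigr)\partial_k.$$
Because a derivation written in the basis $\partial_1,\dots,\partial_n$ with coefficients in $A$ vanishes if and only if each of its coefficients vanishes, the condition $[D,T] = 0$ is equivalent to the system $D(g_k) = T(f_k)$ for $k=1,\dots,n$, which is exactly the claim of the lemma.

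I do not anticipate any real obstacle: the lemma amounts to one symbolic computation together with the general remark that a derivation is determined by its values on a set of algebra generators. The only point that requires a moment of care is the cancellation of the second-order terms in $DT-TD$, which is automatic from the symmetry of mixed partials; this is also why the proof works verbatim for derivations of the rational function field $R=\mathbb{K}(x_1,\dots,x_n)$, a fact that will be useful in the later sections.
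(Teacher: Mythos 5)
Your proof is correct and follows essentially the same route as the paper's one-line argument: since $[D,T]$ is again a derivation determined by its values on the generators $x_1,\dots,x_n$, and $[D,T](x_k)=D(g_k)-T(f_k)$, the commutation is equivalent to the stated system. You simply spell out the cancellation of second-order terms and the vanishing criterion for a derivation, which the paper leaves implicit.
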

\begin{proof}
 It suffices to note that $g_i = T(x_i),\; f_i = D(x_i),\; i=1,\dots,n$.
\end{proof}

\begin{lemma} \label{l:decomp}
     Let  $D = \sum_{i=1}^n f_i \partial_i \in W_n(\mathbb{K})$ be such a derivation that
     $f_1,\dots,f_k \in \mathbb{K}[x_1,\dots,x_k]$ and
     $f_{k+1},\dots,f_n \in \mathbb{K}[x_{k+1},\dots,x_n]$.
     If
     $T \in C_{W_n}(D), \; T = \sum_{i=1}^n g_i \partial_i,$
     then the derivations
     $T_1 = \sum_{i=1}^k g_i \partial_i$ and      $T_2 = \sum_{i=k+1}^n g_i \partial_i$ belong to     $C_{W_n}(D)$ and $T = T_1 + T_2$.
\end{lemma}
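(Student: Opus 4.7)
The decomposition $T=T_1+T_2$ is immediate from the definitions, so the content of the lemma is that each $T_j$ individually commutes with $D$. My plan is to apply Lemma \ref{l:com} to $T_1$ and $T_2$; the criterion reduces the commuting condition to finitely many identities $D(g_i)=T_j(f_i)$, and the support hypotheses on the $f_i$ will force the cross terms to vanish.

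First I would record the obvious fact that, for $i\le k$, the polynomial $f_i$ lies in $\mathbb{K}[x_1,\dots,x_k]$, so $\partial_jf_i=0$ for every $j>k$, and hence
\[
T(f_i)=\sum_{j=1}^n g_j\partial_j(f_i)=\sum_{j=1}^k g_j\partial_j(f_i)=T_1(f_i),\qquad T_2(f_i)=0.
\]
Symmetrically, for $i>k$ we have $f_i\in\mathbb{K}[x_{k+1},\dots,x_n]$, hence $T(f_i)=T_2(f_i)$ and $T_1(f_i)=0$.

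Now I would verify the Lemma \ref{l:com} conditions for $T_1$. Write $T_1=\sum_{i=1}^n g_i^{(1)}\partial_i$ with $g_i^{(1)}=g_i$ for $i\le k$ and $g_i^{(1)}=0$ for $i>k$. For $i\le k$, the hypothesis $T\in C_{W_n}(D)$ gives $D(g_i)=T(f_i)$, and the observation above shows $T(f_i)=T_1(f_i)$, so $D(g_i^{(1)})=T_1(f_i)$. For $i>k$ we have $g_i^{(1)}=0$, so the condition becomes $0=T_1(f_i)$, which is exactly what the support hypothesis on $f_i$ gives. The argument for $T_2$ is identical with the roles of the two index ranges swapped.

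This proof is essentially a bookkeeping exercise, so I do not anticipate a genuine obstacle; the only place where care is needed is in keeping track of which coefficients are being differentiated by which partial derivatives, which is handled cleanly by Lemma \ref{l:com} rather than by expanding $[T_j,D]$ by hand.
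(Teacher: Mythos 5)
Your proof is correct and follows essentially the same route as the paper: both reduce the claim to the coefficient identities of Lemma \ref{l:com} and use the support hypotheses on the $f_i$ to show $T_1(f_i)=T(f_i)=D(g_i)$ for $i\le k$ and $T_1(f_i)=0=D(0)$ for $i>k$. No substantive difference.
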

\begin{proof}
     Put  $T_1 = \sum_{i=1}^n t_i \partial_i$, where
     $t_i = g_i, \; i=1,\dots,k,$ and $t_i = 0, \; i=k+1,\dots,n.$ Let us show that
      $[T_1,D] = 0$.   According to  Lemma \ref{l:com},
      it suffices to show that $T_1(f_i) = D(t_i), \; i=1,\dots,n$.
    Let first  $i=1,\dots,k$. Then  $t_i = g_i$ and
     $T_1(f_i) = T(f_i) = D(g_i) = D(t_i)$. Let now
     $i=k+1,\dots,n$. Then obviously  $T_1(f) = 0$   and,  by conditions of the lemma, we have
     $D(t_i) = D(0) = 0$. Thus,  $D(t_i) = T_1(f_i)$ for
     $i=1,\dots,n$ and by Lemma  \ref{l:com} we have $[D,T_1] = 0$.
     Similarly, one can show that $[D, T_2] = 0$.
\end{proof}

\begin{rem}\label{ratio}
One can easily check that the statements of Lemmas 2-3 are true for derivations of the field $R=\mathbb K(x_1, \ldots , x_n)$, i.e. elements of the Lie algebra ${\widetilde W}_n(\mathbb K).$

\end{rem}

\begin{te}\label{Theorem1}
    Let  $D = \sum_{i,j=1}^n a_{ij}x_j \partial_i \in W_n(\mathbb{K})$
     be a linear derivation, $F = \Ker _{R}D$ be a field of constants of  $D$
     in  $R=\mathbb K(x_1, \ldots ,x_n).$ If  $C_{\mathfrak{gl}_n}(D)$ and
     $C_{W_n}(D)$ are centralizers of $D$ in
     $\mathfrak{gl}_n(\mathbb{K}) \subset W_n(\mathbb{K})$ and in
     $W_n(\mathbb{K}),$ respectively, then  $C_{W_n}(D) = (FC_{\mathfrak{gl}_n}(D)) \cap W_n(\mathbb{K})$.
\end{te}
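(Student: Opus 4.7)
The easy inclusion $(FC_{\mathfrak{gl}_n}(D))\cap W_n(\mathbb K)\subseteq C_{W_n}(D)$ will follow from the Leibniz-type identity $[D,fT]=D(f)\,T+f\,[D,T]$: for $f\in F$ and $T\in C_{\mathfrak{gl}_n}(D)$ both summands on the right vanish, and bilinearity handles general elements of $FC_{\mathfrak{gl}_n}(D)$.

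For the reverse inclusion, I shall fix $T=\sum_i g_i\partial_i\in C_{W_n}(D)$ and first translate $[D,T]=0$ via Lemma \ref{l:com} into the linear system $D(\vec g)=A\vec g$ on $R^n$, with $\vec g=(g_1,\ldots,g_n)^T$. Next, Lemma \ref{l:aut}(3) allows conjugation by a linear change of variables that puts $A$ in Jordan normal form $\mathrm{diag}(J_1,\ldots,J_r)$; since the statement is invariant under such conjugation, we may assume $A$ is block-diagonal from now on. Iterating Lemma \ref{l:decomp} then splits $T=T_1+\cdots+T_r$, where each $T_k$ involves only the partial derivatives in the variables of the $k$-th block and again centralizes $D$. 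The problem thus reduces to a single Jordan block $J=\lambda I_m+N$ of size $m$ acting on variables $\vec x=(x_1,\ldots,x_m)^T$: we need to show that every solution $\vec g\in R^m$ of $D(\vec g)=J\vec g$ has the form $\sum_{j=0}^{m-1} h_j N^j\vec x$ with $h_j\in F$.

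To handle this single-block case, the key observation is that $C_{\mathfrak{gl}_m}(J)=\mathbb K[N]$ with basis $I,N,\ldots,N^{m-1}$, and $N^j\vec x=(x_{j+1},\ldots,x_m,0,\ldots,0)^T$. The matrix whose columns are these $m$ vectors is anti-triangular with $x_m$ on its anti-diagonal, so its determinant equals $\pm x_m^m\neq 0$. Hence $\{N^j\vec x\}_{j=0}^{m-1}$ will serve as an $R$-basis of $R^m$, and any $\vec g\in R^m$ admits a unique expansion $\vec g=\sum_j h_j N^j\vec x$ with $h_j\in R$. Applying $D$ and using $D(\vec x)=J\vec x$ together with $N^jJ=JN^j$ yields $D(\vec g)-J\vec g=\sum_j D(h_j)\,N^j\vec x$; vanishing of the left side and $R$-linear independence of the $N^j\vec x$ then force $D(h_j)=0$, i.e.\ $h_j\in F$. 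Translated back, this gives $T=\sum_j h_j T_j$ with $T_j$ the linear derivation whose matrix is $N^j$ (extended by zeros to $n\times n$), hence $T_j\in C_{\mathfrak{gl}_n}(D)$. Combined with $T\in W_n(\mathbb K)$, this places $T$ in $(FC_{\mathfrak{gl}_n}(D))\cap W_n(\mathbb K)$.

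The main obstacle is the identification of an explicit $R$-basis of solutions inside $C_{\mathfrak{gl}_n}(A)\vec x$: the anti-triangular determinant computation is precisely what forces the rational coefficients $h_j$ in the decomposition to be $D$-constants rather than arbitrary elements of $R$.
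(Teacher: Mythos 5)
Your proposal is correct, and its skeleton coincides with the paper's: both reduce to Jordan normal form via Lemma \ref{l:aut}, split $D$ and $T$ into blocks via Lemma \ref{l:decomp}, and then exploit the powers of the nilpotent part $N$ of a single block (your $T_j$ with matrix $N^j$ is exactly the paper's $V_1^{(r-1)}$ with matrix $M_1^{r-1}$). Where you genuinely diverge is in the treatment of a single block. The paper runs a descent on the index of $T_1$: it locates the first nonzero coefficient $g_r$, shows $D(g_r)=\lambda_1 g_r$ so that $g_r/x_1\in F$, subtracts $\frac{g_r}{x_1}V_1^{(r-1)}$ to strictly increase the index, and iterates (invoking Remark \ref{ratio} because the intermediate derivations acquire rational coefficients). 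You instead observe once and for all that $\vec x, N\vec x,\dots,N^{m-1}\vec x$ form an $R$-basis of $R^m$ (the triangular determinant $\pm x_1^m$ or $\pm x_m^m$, depending on the shift convention --- note the paper uses lower-triangular blocks, so the matrix of columns is lower triangular with $x_1$ on the diagonal), expand $\vec g$ in that basis, and read off $D(h_j)=0$ from $D(\vec g)=J\vec g$ and the commutation $N^jJ=JN^j$. This is a one-shot linear-algebra computation that makes the appearance of $F$-coefficients transparent and avoids the induction on the index; the paper's iterative subtraction is computing the same expansion coefficient by coefficient. You also supply the easy inclusion $(FC_{\mathfrak{gl}_n}(D))\cap W_n(\mathbb K)\subseteq C_{W_n}(D)$ via the Leibniz identity, which the paper leaves implicit. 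The one point worth making explicit in a write-up is that after the block decomposition the coefficients $g_i$ of $T_k$ still depend on all $n$ variables (as the paper notes), so the basis expansion must be taken over the full field $R=\mathbb K(x_1,\dots,x_n)$; your argument accommodates this without change, since $R$-linear independence of the $N^j\vec x$ is all that is used.
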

\begin{proof}
Without loss of
generality one can assume that the matrix $A = (a_{ij})$ of the derivation
   $D$  is written in  Jordan normal form
          \begin{equation} \label{JNF}
          A = \bigoplus_{i=1}^t J_{n_i-n_{i-1}}(\lambda_i) \quad
          \text{for some} \quad
          0=n_0 < n_1 < \dots < n_t = n, \;
          \lambda_1,\dots,\lambda_t \in \mathbb{K}.
     \end{equation}
    (we consider lower triangular Jordan blocks).  To prove this,   it is enough to apply   Lemma \ref{l:aut}.
     Further, since the sum   (\ref{JNF}) is direct, we have
     $D = D_1 + \dots + D_t$, where $D_i \in W_n(\mathbb{K}), i=1, \ldots t$ and is defined by the matrix
     $$
     O_{n_1} \oplus \dots \oplus  J_{n_i - n_{i-1}}(\lambda _{i})
     \oplus \dots \oplus O_{n_t - n_{t-1}}
     $$ with the zero matrices  $O_{n_{j}-n_{j-1}}, j\not =i.$		
     Let  $T \in C_{W_n}(D)$ be an arbitrary element of the centralizer.
     Then, by Lemma  \ref{l:decomp}, which can be easily generalized to  finitely many of terms, the  equality $T = T_1 + \dots + T_t$ holds, where the summands $T_i = \sum_{s=n_{i-1}+1}^{n_i} g_s \partial_s$ belong to  $C_{W_n}(D)$     (note that in  general case the  polynomials $g_j$ depend on all variables $x_1,\dots,x_n$).
     First, let us  show that
     $$
     T_1 = \sum_{s=1}^{n_1} g_s \partial_s \in
     (FC_{\mathfrak{gl}_n}(D))\cap W_n(\mathbb{K}).
     $$
     Here the derivation  $D_1$ is defined  by the matrix of the form  $$J_{n_1}(\lambda _1)\oplus O_{n_2-n_1}\oplus \cdots \oplus O_{n_t-n_{t-1}}, $$ where  $O_{n_i-n_{i-1}}, i=2, \ldots , t$ is  a square zero matrix of  suitable order, and  in our notation
     $$
     J_{n_1}(\lambda _1)=\left(
       \begin{array}{ccccc}
         \lambda _1 & 0 & \cdots & 0 & 0 \\
         1 & \lambda _1 & 0 & \cdots & 0 \\
         \cdots & \cdots & \cdots & \cdots & \cdots \\
         0 & \cdots & 1 & \lambda _1 & 0 \\
         0 & 0 & \cdots & 1 & \lambda _1 \\
       \end{array}
     \right).
          $$

      By Lemma  \ref{l:com}  the equality  $[D,T_1] = 0$ implies      $$
     D(g_1) = \lambda_1 g_1,  D(g_2) = \lambda_1 g_2+g_1,\dots,\; D(g_{n_1}) = \lambda_1 g_{n_1}+g_{n_1-1}.
     $$
     Let $r$  be the index of the derivation  $T_1$, i.e. the smallest natural number such that $g_r \ne 0$. Then  $g_{r-1}=0$ and we have  $D(g_r)=\lambda _1g_r.$
		     On the other hand, it is obvious that
     $$
     D(x_1) = \lambda_1 x_1, D(x_2) = \lambda_1 x_2 + x_{1}, \cdots ,
     D(x_{n_1}) = \lambda_1 x_{n_1} + x_{{n_1}-1}.
     $$
     Taking into account the mentioned above inequality
     $g_r \ne 0$, we obtain
     $g_r/x_{1}\not =0$ and  $D(g_r/x_{1})= 0$, i.e. $g_r/x_{1} \in F$,
     where  $F = \Ker _RD$ is  the field of constant of  $D$ in  $R$.

    Next, let us consider  the linear derivation
     $D_1 \in \mathfrak{gl}_n(\mathbb{K}) \subset W_n(\mathbb{K})$ that
    is defined by the matrix
     $$J_{n_1}(\lambda_1) \oplus O_{n_2-n_1}
     \oplus \dots \oplus O_{n_t - n_{t-1}},
     $$ where  $O_{n_i - n_{i-1}}$
     are square zero matrices of  suitable orders.
		
     Obviously, $[D_1,D] = 0$ and $D_1 = W_1 + V_1$, where the linear derivation
      $W_1$ is defined by the diagonal matrix
     $$d_{n_1}(\lambda_1) \oplus O_{n_2-n_1}
     \oplus \dots \oplus O_{n_t - n_{t-1}},
     $$ and  $V_1$ is given by the lower triangular matrix
         $M_1=J_{n_1}(0) \oplus O_{n_2-n_1} \oplus
     \dots \oplus O_{n_t - n_{t-1}}$.
     It is easy to see that $[W_1, D] = 0,$ and therefore also $[V_1, D]=0.$ One can directly verify that
      the linear derivation  $V_1^{(r-1)},$   defined by the matrix  $M_1^{r-1}$,
     is of the  form  $x_{1} \partial_r + \dots + x_{n_1-r+1} \partial_{n_1}$
     and  commutes wіth $D$.
     Then
     $$T_1^{(2)}:=(T_1 - \frac{g_r}{x_{1}}V_1^{(r-1)}) \in FC_{W_n}(D)\subset {\widetilde W}_n(\mathbb K) ,$$
     where $F$ is the field of constants of $D$ in the field $R$ (recall that $g_r/x_1\in F$).  The derivation $T_1^{(2)}$ obviously  has  a greater index than $T_1$ (recall that the index is the smallest number  of the non-zero
       coefficients of derivation). Repeating these considerations a sufficient number of times and  taking into account the Remark \ref{ratio} we  obtain a derivation $T_1^{(s)}, s\geq 2$ which commutes with       $D$ and is of the form
      $h_{n_1}\partial _{n_1}$ for some rational function  $h_{n_1}\in F\cdot A\subset R$.
			
 But then  $D(h_{n_1})=0,$ that is, the derivation $h_{n_1}\partial _{n_1}$ lies in  $FC_{\mathfrak {gl}_n}(D).$ It follows that the derivation $T_1$ is a sum of  some derivations of  $FC_{\mathfrak{gl}_n}(D),$  because at each step we subtract from $T_1$ a derivation  from  $FC_{\mathfrak{gl}_n}(D).$
     Therefore $T_1 \in (FC_{\mathfrak{gl}_n}(D))\cap W_n(\mathbb{K})$.
It can be similarly shown that
     $T_i \in (FC_{\mathfrak{gl}_n}(D))\cap W_n(\mathbb{K})$ for
     $i=2,\dots,t$. This completes the proof.
\end{proof}


\section{Centralizer of the basic Weitzenboeck derivation}

    Recall that a linear derivation $D$ on
$A = \mathbb{K}[x_1,\dots,x_n]$ is  called the basic Weitzenb$\ddot{\rm o}$ck derivation
if $D$ is defined by the matrix $J_n(0)$ consisting of one Jordan block
with the eigenvalue $\lambda=0$.
(we consider lower triangular Jordan blocks).
 This derivation can be written in coordinates as
$D = x_1 \partial_2 + \dots + x_{n-1} \partial_n$.
If   $T = \sum_{i=1}^n g_i \partial_i$ is an arbitrary derivation on $A$ that commutes with $D,$ then we have by Lemma \ref{l:com}
$$D(g_1) = 0,\; D(g_2) = g_1,\; \dots,\; D(g_n) = g_{n-1}.$$
For any polynomial $f \in A,$ let us denote by $D_f$
the  derivation on $A$  of the form
$$
D_f = D^{n-1}(f)\partial_1 + D^{n-2}(f)\partial_2 + \dots +D(f)\partial_{n-1}  + f \partial_n.
$$
There is a convenient criterion for verifying whether a derivation
$T \in W_n(\mathbb{K})$ commutes with the basic Weitzenb$\ddot{\rm o}$ck derivation.
\begin{pro} \label{p:repr}
A derivation $T \in W_n(\mathbb{K})$ commutes with the basic Weitzenb$\ddot{\rm o}$ck derivation $D$ if and only if
there is a polynomial $f \in A$ with $D$-order $\le n-1$ such that
$T = D_f$.
\end{pro}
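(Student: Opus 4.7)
The strategy is to translate the commutation condition through Lemma \ref{l:com} into an explicit recursion among the coefficients of $T$, and then observe that this recursion is solved precisely by the derivations $D_f$.

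Write $D = \sum_{i=1}^n f_i\partial_i$ with $f_1 = 0$ and $f_i = x_{i-1}$ for $i\geq 2$, and $T = \sum_{i=1}^n g_i\partial_i$. By Lemma \ref{l:com}, $[T,D]=0$ is equivalent to $D(g_i) = T(f_i)$ for all $i$. Since $T(f_1)=0$ and $T(x_{i-1}) = g_{i-1}$, this system becomes
\[
D(g_1) = 0,\qquad D(g_i) = g_{i-1}\quad(i=2,\ldots,n).
\]

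For the ``only if'' direction I would set $f := g_n$ and iterate the recursion: $g_{n-1} = D(f)$, $g_{n-2} = D^2(f)$, and in general $g_{n-k} = D^k(f)$. The condition $D(g_1)=0$ then reads $D^n(f)=0$, i.e.\ $\ord_D(f)\leq n-1$. Substituting these expressions into $T$ yields exactly $T = D_f$.

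For the ``if'' direction, given $f\in A$ with $\ord_D(f)\leq n-1$, define $g_i := D^{n-i}(f)$ for $i=1,\ldots,n$. Then $D(g_i) = D^{n-i+1}(f) = g_{i-1}$ for $i\geq 2$ and $D(g_1) = D^n(f) = 0$, so the system above is satisfied, and another application of Lemma \ref{l:com} gives $[D_f,D]=0$.

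The argument is essentially a bookkeeping exercise using Lemma \ref{l:com}; there is no real obstacle. The only point requiring a moment of attention is matching indices correctly between the recursion (which runs downward from $g_n$) and the definition of $D_f$ (in which the coefficient of $\partial_i$ is $D^{n-i}(f)$), together with noting that the condition $D(g_1)=0$ is precisely what encodes the bound $\ord_D(f)\leq n-1$.
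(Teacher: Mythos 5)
Your proof is correct and follows essentially the same route as the paper: both translate $[T,D]=0$ via Lemma~\ref{l:com} into the recursion $D(g_1)=0$, $D(g_i)=g_{i-1}$, set $f=g_n$ so that $g_{n-k}=D^k(f)$ and $D^n(f)=0$ encodes $\ord_D(f)\le n-1$, and verify the converse by direct substitution. No issues.
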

\begin{proof}
Take any derivation  $T \in C_{W_n}(D)$ and write down it in the form   $T = \sum_{i=1}^n g_i \partial_i$
     for some polynomials $g_i \in A$. Then from the condition $[T,D] = 0$
      we obtain (by Lemma \ref{l:com}) the equalities
     $$D(g_1) = 0,\; D(g_2) = g_1,\; \dots,\; D(g_n) = g_{n-1}.$$
     Hence $g_n$ is a  polynomial of $D$-order $\le n-1$.
     Letting $f = g_n$ we have $T = D_f$ and $\ord_D(f) \le n-1$.
     The other way around, if $T = \sum_{i=1}^{n-1} D^{n-i}(f) \partial_i+f\partial _n$ and
     $\ord_D(f) \le n-1$, then again by Lemma \ref{l:com} we get that
     $[T,D] = 0$.
\end{proof}
The Weitzenb$\ddot{\rm o}$ck derivation $D$ on $A = \mathbb{K}[x_1,\dots,x_n]$
determines the filtration
 \begin{equation} \label{filtr}
 0 = A_0 \subset A_1 \subset \dots \subset A_n \subset \dots ,
 \end{equation}
 where $A_1 = \Ker D, \; A_i = \{f \in A  \ | \ D(f) \in A_{i-1}\},\;i=2,\dots
.$ It follows from  Proposition \ref{p:repr}  that for finding $C_{W_n}(D)$
it is sufficient to study the terms $A_i$ of the filtration   for $i \le n-1$, since
$T \in C_{W_n}(D)$ are of the form $T = \sum_{i=1}^n D^{n-i}(f)\partial_i$
for $f \in A_{n-1}$.
Note that $A_{n-1}$  is a module over the algebra of constants
$A_1 = \Ker D$ of the derivation $D$.

For studying $A_{n-1}$ we apply some ideas from the paper \cite{Bedratyuk}, where
the Weitzenb$\ddot{\rm o}$ck derivation $D$ was embedded in a $3$-dimensional Lie subalgebra
$L = \mathbb{K} \left< D, \widehat{D}, H \right> \subset W_n(\mathbb{K});$  the Lie algebra $L$ is constructed in such a way to be isomorphic to the simple Lie algebra $\mathfrak{sl}_2(\mathbb{K})$.

Linear derivations $\widehat{D}$ and $H$ on $A$ we define by the rule:
$$
\widehat{D}(x_i) = i(n-i)x_{i+1},\quad H(x_i) = (n-2i+1)x_i, i=1, \ldots ,n.
$$
It can be directly verified  (see \cite{Bedratyuk}) that the following commutation relations hold
\begin{equation} \label{com_rel}
[D,\widehat{D}] = H,\quad [H,D]=2D,\quad [H,\widehat{D}] = -2\widehat{D},
\end{equation}
that is,  $L \simeq \mathfrak{sl}_2(\mathbb{K})$.

The derivation $\widehat{D}$ is obviously locally nilpotent;
$\widehat{D}$-order of a polynomial $f \in A$ we will denote by $\ord(f)$
and call the order of $f$.
It is easy to see that each monomial
$x_1^{\alpha_1} \dots x_n^{\alpha_n} \in A$ is an eigenvector for
$H$ with the eigenvalue
$$\lambda _{(\alpha _1, \ldots ,\alpha _n)}=n \sum_{i=1}^n \alpha_i - (\alpha_1 + 3\alpha _2+ \cdots + (2i-1)\alpha _i+\cdots +(2n-1)\alpha_n).$$
We  call this eigenvalue the weight of the monomial  $x_1^{\alpha_1} \dots x_n^{\alpha_n}.$
A polynomial $f \in A$ will be called \emph{isobaric}, if all its monomials
have the same weight. Since all the monomials form a basis of the linear space
$A = \mathbb{K}[x_1,\dots,x_n]$ over $\mathbb{K}$,  the linear operator
$H$ is diagonalized in this basis.
But then the vector space $A$ is the direct sum $A = \bigoplus_{\lambda_ {(\alpha_1,\dots,\alpha_n)}} A_{\lambda_ {(\alpha_1,\dots,\alpha_n)}}$  of subspaces,
where $A_{\lambda_{(\alpha_1,\dots,\alpha_n)}}$ is the linear  span of all monomials
of weight $\lambda_{(\alpha_1,\dots,\alpha_n)}$, that is $A_{\lambda_{(\alpha_1,\dots,\alpha_n)}}$ consists of
isobaric monomials of the weight $\lambda_{(\alpha_1,\dots,\alpha_n)}$.

The statement of the next technical lemma follows immediately from the main properties of $sl_{2}(\mathbb K)$-modules
(see, for example, \cite{Humphreys}).

\begin{lemma}\label{sl}
Let $D$ be the basic Weitzenb$\ddot{o}$ck derivation on $\mathbb K[x_1, \ldots , x_n]$ and  let $f, g\in A$ be
isobaric polynomials with weights  $\lambda$ and $\mu , $ respectively. Then:

 (1) $D(f)$ and $\widehat{D}(f)$ are  isobaric polynomials of the weights $\lambda +2$ and $\lambda -2,$ respectively;

 (2) $fg$ is an isobaric polynomial with the weight $\lambda +\mu;$

 (3) If $f\in {\rm Ker}D$ and $f=f_{\lambda _1}+\cdots +f_{\lambda _k}$ is a
 decomposition of the polynomial $f$ into the sum of isobaric polynomials of pairwise distinct weights,
 then $f_{\lambda _i}\in {\rm Ker}D, i=1, \ldots ,k.$
\end{lemma}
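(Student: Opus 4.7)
The plan is to derive all three parts directly from the $\mathfrak{sl}_2$-commutation relations (\ref{com_rel}) together with the fact that the decomposition $A = \bigoplus_\lambda A_\lambda$ into weight spaces of $H$ is a direct sum.

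First I would prove part (1). Suppose $f$ is isobaric of weight $\lambda$, i.e.\ $H(f) = \lambda f$. Using $[H,D] = 2D$ I compute
\[
H(D(f)) = D(H(f)) + [H,D](f) = \lambda D(f) + 2 D(f) = (\lambda+2) D(f),
\]
so $D(f)$ is an eigenvector of $H$ with eigenvalue $\lambda + 2$, and since the weight spaces $A_\mu$ are spanned by the monomials of weight $\mu$, this means $D(f)$ is isobaric of weight $\lambda+2$. The analogous computation with $[H,\widehat{D}] = -2\widehat{D}$ gives that $\widehat{D}(f)$ is isobaric of weight $\lambda-2$.

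For part (2), I would use that $H$ is a derivation of $A$: if $H(f) = \lambda f$ and $H(g) = \mu g$, then
\[
H(fg) = H(f) g + f H(g) = (\lambda+\mu) fg,
\]
which says exactly that $fg$ is isobaric of weight $\lambda + \mu$.

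For part (3), let $f = f_{\lambda_1} + \cdots + f_{\lambda_k}$ with the $\lambda_i$ pairwise distinct and $f \in \Ker D$. By part (1), each $D(f_{\lambda_i})$ lies in the weight space $A_{\lambda_i+2}$, and the values $\lambda_i+2$ are again pairwise distinct. The relation $0 = D(f) = \sum_{i=1}^k D(f_{\lambda_i})$ is therefore an equality in the direct sum $\bigoplus_\mu A_\mu$, which forces each summand $D(f_{\lambda_i}) = 0$, i.e.\ $f_{\lambda_i} \in \Ker D$.

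There is no serious obstacle here: everything reduces to the standard observation that $D$ and $\widehat{D}$ shift $H$-weights by $\pm 2$, plus the uniqueness of the weight-space decomposition. The only thing one has to be slightly careful about is confirming that the monomials truly form an eigenbasis for $H$ (already noted in the text) so that ``isobaric'' coincides with ``$H$-eigenvector'', which legitimizes the direct-sum argument in part (3).
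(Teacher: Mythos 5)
Your proof is correct and takes essentially the approach the paper intends: the paper offers no written argument, merely remarking that the lemma ``follows immediately from the main properties of $\mathfrak{sl}_2(\mathbb{K})$-modules'' with a citation to Humphreys, and your computation using $[H,D]=2D$, $[H,\widehat{D}]=-2\widehat{D}$, and the directness of the $H$-weight-space decomposition is exactly the standard verification behind that remark. Nothing is missing.
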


It follows from the known Weitzenb$\ddot{\rm o}$ck theorem \cite{Weitz}  that the algebra of constants  ${\rm Ker} D$
of the Weitzenb$\ddot{\rm o}$ck derivation on $\mathbb K[x_1, \ldots , x_n]$ is finitely generated.
Increasing the number of generators (if needed) we can always assume (by Lemma \ref{sl}) that ${\rm Ker} D$
has a finite system of generators consisting of isobaric polynomials.

The following lemma indicates how the derivations $D$ and $\widehat D$ act on
the terms of the filtration
$0 = A_0 \subset A_1 \subset \dots \subset A_n \subset \dots $ (see (\ref{filtr})).

\begin{lemma}\label{isobar}
     Let $b \in A_k \setminus A_{k-1}$ be an isobaric polynomial,
     $k \ge 2$. Then
     \begin{enumerate}
          \item $\widehat{D}(D(b)) \in A_k \setminus A_{k-1}$;
          \item $D^{s-1}(\widehat{D}D(b)) = \widehat{D}D^s(b) + \lambda D^{s-1}(b)$
     for any $s \ge 2$ and some
     $\lambda = \lambda(s),\; \lambda \in \mathbb{K}$.
     \end{enumerate}
\end{lemma}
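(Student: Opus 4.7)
My plan is to derive both parts from the $\mathfrak{sl}_2$-commutator identity
\begin{equation*}
D^m\widehat D = \widehat D\, D^m + m D^{m-1}H + m(m-1)D^{m-1},
\end{equation*}
which I would establish by a short induction on $m$ using $[D,\widehat D]=H$ and $[H,D]=2D$ (the relation $HD = DH + 2D$ lets one push each $H$ in $\sum_{i=0}^{m-1} D^i H D^{m-1-i}$ through the remaining powers of $D$).

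For part (2), I apply this identity with $m=s-1$ to the polynomial $D(b)$. Since $b$ is isobaric of weight $\mu$, Lemma \ref{sl}(1) gives $H(D(b))=(\mu+2)D(b)$, and the right-hand side collapses to
$\widehat D D^s(b) + \bigl((s-1)(\mu+2) + (s-1)(s-2)\bigr)D^{s-1}(b) = \widehat D D^s(b) + (s-1)(\mu+s)D^{s-1}(b)$, which is exactly the claimed identity with the explicit value $\lambda(s)=(s-1)(\mu+s)$.

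For part (1), recall that $b\in A_k\setminus A_{k-1}$ means $D^k(b)=0$ and $D^{k-1}(b)\neq 0$. Specializing the formula from (2) to $s=k+1$ gives $D^k(\widehat D D(b)) = \widehat D D^{k+1}(b) + \lambda(k+1)D^k(b) = 0$, so $\widehat D D(b)\in A_k$. Specializing to $s=k$ gives $D^{k-1}(\widehat D D(b)) = (k-1)(\mu+k)D^{k-1}(b)$, so it remains only to verify $(k-1)(\mu+k)\ne 0$. The factor $k-1$ is nonzero because $k\ge 2$; the harder step, which I expect to be the main obstacle, is ruling out $\mu=-k$. I would handle this using the $\mathfrak{sl}_2$-module structure of $A$: decompose the finite-dimensional $\mathfrak{sl}_2$-submodule containing $b$ into irreducibles and write $b=\sum b_\alpha$ with each $b_\alpha$ a weight-$\mu$ vector in some $V_{m_\alpha}$. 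Since $\ord_D(b)=k-1$, at least one summand satisfies $\ord_D(b_\alpha)=k-1$, which forces $b_\alpha$ to occupy position $k-1$ from the highest-weight vector of $V_{m_\alpha}$; hence $\mu = m_\alpha - 2(k-1)$ with $m_\alpha\ge k-1$, and therefore $\mu + k = m_\alpha - (k-2) \ge 1 > 0$. This nonvanishing, together with $D^{k-1}(b)\ne 0$, shows $\widehat D D(b)\notin A_{k-1}$ and completes (1).
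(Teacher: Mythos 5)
Your proposal is correct. It uses the same two ingredients as the paper's proof --- the $\mathfrak{sl}_2$ commutation relations to move $\widehat{D}$ past powers of $D$, and the decomposition of a finite-dimensional $L$-submodule of $A$ into irreducibles with standard bases --- but organizes them differently. The paper proves part (2) by a step-by-step induction on $s$, applying $D\widehat D=\widehat D D+H$ once per step and absorbing the resulting eigenvalue of $H$ into $\lambda$, and proves part (1) \emph{independently} by writing $b=b_1+\dots+b_m$ over the irreducible summands and checking $D^{k-1}(\widehat D D(b_i))\neq 0$, $D^k(\widehat D D(b_i))=0$ directly in the standard basis. You instead establish the closed-form operator identity $D^m\widehat D=\widehat D D^m+mD^{m-1}H+m(m-1)D^{m-1}$ (which is correct; I checked the induction), obtain the explicit value $\lambda(s)=(s-1)(\mu+s)$, and then deduce part (1) from part (2) by specializing $s=k$ and $s=k+1$, so that the module-theoretic input is reduced to the single weight estimate $\mu=m_\alpha-2(k-1)$ with $m_\alpha\geq k-1$, whence $\mu+k\geq 1$. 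Your version buys an explicit formula for $\lambda$ and, in particular, a transparent proof that $\lambda\neq 0$ --- a fact the paper invokes later (in the proof of Theorem \ref{Th2}) with only an implicit justification via the combination of parts (1) and (2); the paper's version avoids writing the closed-form identity at the cost of a separate basis computation for part (1).
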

\begin{proof}
     1.
     Let $\deg b = t$ and let $V$ be  the set of all polynomials of $A$
     of degree at most $t$. Since $D, \widehat{D}, H=[D,\widehat{D}]$ are
     linear derivations on $A$, the subspace $V$ is  invariant under the natural action of the Lie algebra
     $L = \mathbb{K} \left< D, \widehat{D}, H \right> \simeq
     \mathfrak{sl}_2(\mathbb{K})$. Obviously $\dim _{\mathbb K}V < \infty$ and $b \in V$
    Then the  $L$-module $V$ can be  decomposed into  direct sum
     $V = V_1 \oplus \dots \oplus V_m$ of irreducible $L$-modules and
     $b = b_1 + \dots + b_m,\; b_i \in V_i$. Since
     $D^{k-1}(b) \ne 0,\; D^k(b) = 0$, at least one of the summands, for example
     $b_i,$ has the same property, that is
     $D^{k-1}(b_i) \ne 0,\; D^k(b_i) = 0$. Let us denote $e = D, f = \widehat{D}, h = H$ and choose in the irreducible
     $L$-module $V_i$ the standard basis
     $v_0,\dots, v_{n_i}$ (see, for example, \cite{Humphreys}). Then the element $b_i$ can be written in the form
     $b_i = \alpha_0 v_0 + \dots + \alpha_{k-1}v_{k-1}. $
     One can easily verify in these standard notations that $D^{k-1}(\widehat{D}D(b_i)) \ne 0$, but
     $D^k(\widehat{D}D(b_i)) = 0$. Taking into account our choice of the element $b_i$ we have
     $D^{k}(\widehat{D}D(b)) = 0$, $D^{k-1}(\widehat{D}D(b)) \ne 0$, that is
     $\widehat{D}D(b) \in A_k \setminus A_{k-1}$.

     2.
     Induction by $s$. If $s = 2$, then
     $$D \widehat{D}D(b) = \widehat{D}D^2(b) + HD(b) = \widehat{D}D^2(b) + \lambda D(b), \
     \lambda \in \mathbb{K},$$ since $D(b)$ is isobaric by Lemma \ref{sl}
     (recall that by  conditions of the lemma $b$ is an isobaric polynomial).
     Assume  the second statement of the lemma is true for $s-1$; we will  prove it for $s$.
     We have
     $$
     D^{s-1}(\widehat{D}D(b)) = D(D^{s-2}\widehat{D}D(b)) =
     D(\widehat{D}D^{s-1}(b) + \mu D^{s-2}(b))
     $$
     for some $\mu \in \mathbb{K}$ by the inductive hypothesis. Then
     \begin{align*}
     D^{s-1}(\widehat{D}D(b)) = D\widehat{D}D^{s-1}(b) + \mu D^{s-1}(b) =
     (\widehat{D}D + H)D^{s-1}(b) + \mu D^{s-1}(b) &= \\
     \widehat{D}D^s(b) + HD^{s-1}(b) + \mu D^{s-1}(b) =
     \widehat{D}D^s(b) + \alpha D^{s-1}(b) + \mu D^{s-1}(b) &= \\
     =\widehat{D}D^s(b) + (\alpha + \mu)D^{s-1}(b)
     \end{align*}
     (here we have used the fact that $D^{s-1}(b)$ is an isobaric polynomial
     with some eigenvalue $\alpha$ for $H$). This completes the proof.
\end{proof}
Next, let $a_1, \ldots , a_k$ be a system of generators of the kernel $A_1={\rm Ker}D$
(as a subalgebra of $A$). Without loss of generality one can assume  that the polynomials $a_1, \ldots , a_k$ are isobaric.
For convenience, we write  $ {\widehat D}^0(a_i)=1, i=1, \ldots k.$
Consider a system of subsets of $A$ of the form
$$ S_i =
     \{
          \widehat{D}^{k_1}(a_{i_1}) \times \cdots \times \widehat{D}^{k_t}(a_{i_t})\ | \
          a_{i_j} \in \{a_1,\dots,a_k\},\; k_{j}\geq 0, \  \sum_{j=1}^t k_j \leq i-1
     \}, \ i=1, 2,\ldots.$$

By our convention, we have  $S_1=\{ 1\},$ $S_2=\{ 1, {\widehat D}(a_1), \ldots ,{\widehat D}(a_k)\},$
and by Lemma \ref{sl} all the elements of the set $S_i, i\geq 1$ are isobaric polynomials.

\begin{te}\label{Th2}
     Let $D$ be the basic Weitzenb$\ddot{o}$ck derivation on
     $A = \mathbb{K}[x_1,\dots,x_n]$. Let us choose a system of isobaric generators
     $a_1,\dots,a_k$ of the kernel $A_1 = \Ker D$ (as a subalgebra in $A$)
     and denote $A_i = \Ker D^i,\; i \ge 1$. Then
     $A_i,\; i \ge 1$ is an $A_1$-module with the set of generators (as a module) of the form
     $$
     S_i =
     \{
          \widehat{D}^{k_1}(a_{i_1}) \times \cdots \times  \widehat{D}^{k_t}(a_{i_t}) \ \vert \
          a_{i_j} \in \{a_1,\dots,a_k\},\; \sum_{j=1}^t k_j \leq i-1
     \}.$$

\end{te}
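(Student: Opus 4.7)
The plan is to use the $\mathfrak{sl}_2$-module structure on $A$ given by $L = \mathbb{K}\langle D, \widehat{D}, H\rangle$ directly, rather than by a peeling induction on $i$. For any $b \in A_i$, I decompose the finite-dimensional $L$-submodule containing $b$ into irreducibles and then read off an expression of $b$ in terms of $S_i$ from the standard basis of each irreducible summand.

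First I would verify the easy inclusion $A_1 \cdot S_i \subseteq A_i$. Iterating the standard identity $D\widehat{D}^k(a) = k(\lambda - k + 1)\widehat{D}^{k-1}(a)$ (valid for isobaric $a \in A_1$ of $H$-weight $\lambda$, and a direct consequence of the commutation relations (\ref{com_rel})) shows that $\widehat{D}^k(a) \in A_{k+1}$. Combined with the subadditivity $\ord_D(uv) \le \ord_D(u) + \ord_D(v)$, a product $\widehat{D}^{k_1}(a_{i_1}) \cdots \widehat{D}^{k_t}(a_{i_t})$ has $D$-order at most $\sum k_j$; when $\sum k_j \le i-1$ this places the product in $A_i$, and multiplying by any element of $A_1 = \Ker D$ preserves this.

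For the reverse inclusion $A_i \subseteq A_1 \cdot S_i$, I would take $b \in A_i$ and proceed as follows. Because $D$, $\widehat{D}$, $H$ are degree-preserving derivations of $A$, the subspace $V \subseteq A$ of polynomials of degree $\le \deg b$ is a finite-dimensional $L$-module containing $b$. Decompose $V = V_1 \oplus \dots \oplus V_m$ into irreducible $L$-summands and write $b = b_1 + \dots + b_m$ with $b_j \in V_j$. Each $V_j$ has a unique (up to scaling) highest weight vector $v_j$, characterized by $D v_j = 0$ and $H v_j = \lambda_j v_j$, with standard basis $v_j, \widehat{D}(v_j), \ldots, \widehat{D}^{\lambda_j}(v_j)$. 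Since $v_j \in \Ker D \cap A = A_1$, I can write $v_j = p_j(a_1, \ldots, a_k)$, and Lemma \ref{sl}(3) lets me take $p_j$ as a sum of isobaric monomials of common weight $\lambda_j$. Expanding $b_j = \sum_{r=0}^{\lambda_j} c_{j,r}\widehat{D}^r(v_j)$ in the standard basis and iterating the raising formula gives
\[
D^i\widehat{D}^r(v_j) \;=\; \frac{r!}{(r-i)!}\,\prod_{s=1}^{i}(\lambda_j - r + s)\,\widehat{D}^{r-i}(v_j),
\]
whose coefficient is nonzero for $i \le r \le \lambda_j$. Hence $D^i b_j = 0$ forces $c_{j,r} = 0$ for $r \ge i$, so $b_j$ is a $\mathbb{K}$-linear combination of $\widehat{D}^r(v_j)$ with $r \le i-1$. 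A final application of the Leibniz rule for the derivation $\widehat{D}$ to each $\widehat{D}^r\bigl(p_j(a_1, \ldots, a_k)\bigr)$ expands it as a $\mathbb{K}$-linear combination of products $\widehat{D}^{k_1}(a_{i_1}) \cdots \widehat{D}^{k_t}(a_{i_t})$ with $\sum k_\ell = r \le i-1$, i.e.\ elements of $S_i$. Summing over $j$ places $b$ in the $\mathbb{K}$-linear span of $S_i$, and hence in $A_1 \cdot S_i$.

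The main obstacle I expect is the bookkeeping of weights and standard-basis coefficients: one must verify that the highest weight vectors $v_j$ produced by the $L$-decomposition are genuinely isobaric elements of $A_1$ with weight matching the dimension of $V_j$ (which Lemma \ref{sl}(3) handles), and that the coefficient $\prod_{s=1}^{i}(\lambda_j - r + s)$ does not vanish in the range $i \le r \le \lambda_j$. Each individual verification is routine, but the proof relies on a clean interaction between the $\mathfrak{sl}_2$-decomposition of $V$, the Leibniz expansion of $\widehat{D}^r$ on products of generators, and the factorisation of $A_1$-elements through the chosen generators $a_1, \ldots, a_k$.
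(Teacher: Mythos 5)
Your proof is correct, but it takes a genuinely different route from the paper. The paper argues by induction on $i$: for an isobaric $b\in A_i\setminus A_{i-1}$ it establishes (via its Lemma~\ref{isobar}, whose proof is where the $\mathfrak{sl}_2$-decomposition is actually used) the identity $D^{i-1}(\widehat{D}D(b))=\widehat{D}D^i(b)+\lambda D^{i-1}(b)$ with $\lambda\neq 0$, so that $\widehat{D}D(b)-\lambda b\in A_{i-1}$; it then applies the inductive hypothesis to $D(b)\in A_{i-1}$ and hits the resulting expression with $\widehat{D}$, using the chain rule on the $A_1$-coefficients to stay inside $A_1\cdot S_i$. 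You instead skip the induction entirely: you decompose the finite-dimensional $L$-module containing $b$ into irreducibles, observe that each highest weight vector lies in $A_1$ and is therefore a polynomial in $a_1,\dots,a_k$, kill the coefficients $c_{j,r}$ with $r\geq i$ by the nonvanishing of $\frac{r!}{(r-i)!}\prod_{s=1}^{i}(\lambda_j-r+s)$, and finish with the Leibniz expansion of $\widehat{D}^r$ on products of generators. Your version buys a one-shot, fully explicit argument (and you also verify the easy inclusion $A_1\cdot S_i\subseteq A_i$, which the paper leaves implicit); the paper's induction buys the intermediate structural fact that $\widehat{D}D(b)-\lambda b$ drops into $A_{i-1}$, and avoids having to track standard-basis coefficients globally. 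One small bookkeeping point in your last step: under the paper's convention $\widehat{D}^0(a_i)=1$, the Leibniz terms in which some $k_\ell=0$ contribute factors $a_{i_\ell}\in A_1$, so the expansion of $\widehat{D}^r\bigl(p_j(a_1,\dots,a_k)\bigr)$ is an $A_1$-linear (not merely $\mathbb{K}$-linear) combination of elements of $S_i$; this is exactly what the theorem requires, so the conclusion $b\in A_1\cdot S_i$ stands.
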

\begin{proof}
     Induction by $i$. For $i=1$ everything is clear because   $A_1$ as $A_1$-module is generated by the element $1.$
     Assuming the statement of the theorem to hold for $i-1$, we prove it for $i$. Let
     $b \in A_i \setminus A_{i-1}$ be an arbitrary element.
     Then the element $b$ can be written in the form $b = b_1 + \dots + b_t$, where $b_s$ are isobaric polynomials
     of pairwise different weights, $b_s \in A_i,\; s=1,\dots, t,$ and at least
     one of the polynomials $b_s$ belongs to the set $A_i \setminus A_{i-1}$.
     Therefore, without loss of generality we can assume that $b$ is isobaric.

     By Lemma \ref{isobar}, it holds $\widehat{D}D(b) \in A_i \setminus A_{i-1}$ and
     by the same Lemma
     \begin{equation}\label{isoaction}
     D^{i-1}(\widehat{D}D(b)) = \widehat{D}D^i(b) + \lambda D^{i-1}(b)
     \end{equation}
     for some
     $\lambda =\lambda (i)\in \mathbb{K}$. Since $b \in A_i$, we have
     $D^i(b) = 0$; thus from the equality (\ref{isoaction}) we obtain that
     $D^{i-1}(\widehat{D}D(b)) = \lambda D^{i-1}(b)$ for some
     $\lambda \in \mathbb{K}.$  Additionally, by Lemma \ref{isobar}, $\lambda \ne 0$.
     Hence, we have the equality $D^{i-1}(\widehat{D}D(b) - \lambda b) = 0$. The latter means  that
     the element $d := \widehat{D}D(b) - \lambda b$ \ belongs to $ A_{i-1}$  and  $b = \lambda^{-1}(\widehat{D}D(b) - d).$
     By the inductive hypothesis $d$ is a linear combination of elements of the set
     $S_{i-1}$ with coefficients in $A_1 = \Ker D$. Besides,
     the element $D(b) \in A_{i-1}$ and consequently by inductive hypothesis
     $$D(b) = \sum_{s_j \in S_{i-1}}c_j s_j,\; c_j \in A_1.$$
     Applying $\widehat{D}$ to the both sides of the last equality we get
     \begin{equation}     \label{eqn}
     \widehat{D}D(b) = \sum_{s_j \in S_{i-1}}
     (\widehat{D}(c_j)s_j + c_j \widehat{D}(s_j)).
     \end{equation}
     Note that $\widehat{D}(s_j)$ is a linear combination of elements of
     $S_i$ with integral coefficients. In addition, for
     $c_j \in A_1 = \Ker D$ we have that $c_j = c_j(a_1,\dots,a_k)$ is a polynomial in the generators $a_1,\dots,a_k$ of the kernel $\Ker D = A_1$.
     It is easy to see that
     $$
     \widehat{D}(c_j) = \sum_{s=1}^k
     \frac{\partial c_j}{\partial a_s}(a_1,\dots,a_k) \widehat{D}(a_s),
     $$
     and hence $\widehat{D}(c_j)$ is a linear combination of elements of $S_2$
     with the coefficients in $A_1$. Therefore, from the equality (\ref{eqn}) we see that
     $\widehat{D}D(b)$ is a linear combination of elements of $S_i$ with
     the coefficients in $A_1 = \Ker D$. The latter means  that $S_i$ is a system of generators of
     $A_1$-module $A_i.$ The proof is completed.
\end{proof}
\begin{te}\label{Th3}
     Let $D$ be the basic Weitzenb$\ddot{o}$ck derivation on
     $A = \mathbb{K}[x_1,\dots,x_n]$,  let $a_1,\dots,a_k$ be a system of generators of
     the kernel $A_1 = \Ker D$ (as a subalgebra in $A$.)
     If
     $$
     S_n =
     \{
          \widehat{D}^{k_1}(a_{i_1}) \times \cdots \times \widehat{D}^{k_t}(a_{i_t}) \ \vert \
          a_{i_j} \in \{a_1,\dots,a_k\},\; \sum_{j=1}^t k_j \leq n-1
     \},
    $$
     then $C_{W_n(\mathbb{K})}(D)$   has a system of generators
     $\{D_{s} \ | \ s \in S_n\}$ (as a module over $\Ker D$), where $D_s = \sum_{i=1}^n D^{n-i}(s) \frac{\partial}{\partial x_i} .$
\end{te}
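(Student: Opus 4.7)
The plan is to reduce the statement to a direct combination of Proposition \ref{p:repr} and Theorem \ref{Th2}, using the observation that the assignment $f \mapsto D_f$ is $\Ker D$-linear from $A_n$ to $C_{W_n}(D)$. More precisely, I want to package the theorem as an isomorphism of $\Ker D$-modules $A_n \to C_{W_n}(D)$ sending $f$ to $D_f$, after which a generating set on the source transports to a generating set on the target.

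First I would take an arbitrary $T \in C_{W_n}(D)$ and apply Proposition \ref{p:repr} to write $T = D_f$ for some $f \in A$ with $\ord_D(f) \leq n-1$. The condition $\ord_D(f) \leq n-1$ is exactly $D^n(f) = 0$, i.e.\ $f \in A_n = \Ker D^n$ (the case $f = 0$ giving $T = 0$). Next, by Theorem \ref{Th2} applied with $i = n$, the set $S_n$ generates $A_n$ as an $A_1$-module, so there exist $c_s \in A_1 = \Ker D$ with
\[
f = \sum_{s \in S_n} c_s\, s.
\]

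The core computational step is then to verify that this decomposition passes through to $D_f = \sum_{s} c_s D_s$. This reduces to two elementary identities: additivity $D_{f_1 + f_2} = D_{f_1} + D_{f_2}$, which is immediate from linearity of each power $D^{n-i}$, and the multiplicative rule $D_{cg} = c\, D_g$ for $c \in \Ker D$ and $g \in A_n$. The latter follows from the observation that if $D(c) = 0$ then $D^j(cg) = c\, D^j(g)$ for every $j \geq 0$; one proves this by a one-line induction on $j$ using the Leibniz rule, and substitutes into the definition of $D_{cg}$ term by term.

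The last point to verify is that each $D_s$ with $s \in S_n$ really lies in $C_{W_n}(D)$, so that $\{D_s \ | \ s \in S_n\}$ is an actual subset of the centralizer. By Proposition \ref{p:repr} this is equivalent to $s \in A_n$, which I would check directly from the definition of $S_n$: since $a_{i_j} \in \Ker D$, an easy induction using $[D,\widehat{D}] = H$ and $H(a_{i_j}) \in \Ker D$ gives $\widehat{D}^{k_j}(a_{i_j}) \in A_{k_j + 1}$, and the general Leibniz formula $D^m(uv) = \sum \binom{m}{j}D^j(u)D^{m-j}(v)$ shows that the $D$-order is subadditive on products, hence the product $\widehat{D}^{k_1}(a_{i_1}) \cdots \widehat{D}^{k_t}(a_{i_t})$ has $D$-order at most $\sum k_j \leq n-1$, so lies in $A_n$. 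I do not anticipate any real obstacle beyond bookkeeping: all of the substance has already been absorbed into Proposition \ref{p:repr} (representing commuting derivations via $D_f$) and Theorem \ref{Th2} (generating $A_n$ over $\Ker D$); the present theorem is essentially the statement that these two results are compatible via the $\Ker D$-linear map $f \mapsto D_f$.
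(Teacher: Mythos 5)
Your proposal is correct and follows essentially the same route as the paper: apply Proposition \ref{p:repr} to write $T = D_f$ with $f \in A_n$, invoke Theorem \ref{Th2} to expand $f$ over $S_n$ with coefficients in $\Ker D$, and transport this through the $\Ker D$-linear map $f \mapsto D_f$. The paper simply asserts the identities $D_{\sum c_s s} = \sum c_s D_s$ and $D^n(s) = 0$ for $s \in S_n$ that you verify explicitly, so your write-up is just a more detailed version of the same argument.
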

\begin{proof}
     For any element $s \in S_n$ we have $D^n(s) = 0$.
     Let $T$ be an arbitrary element of the centralizer $C_{W_n}(D)$.
     Then, by Proposition \ref{p:repr}, $T = D_b,\; b \in A_n.$
     By Theorem \ref{Th2}, the element $b$ is of the form $b = \sum_{s \in S_n} r_i s_i$, where $r_i \in \Ker D$.
     Therefore $ D_b = \sum_{s \in S_n} r_i D_{s_i}$. But then
     $\{ D_{s_i},\; s_i \in S_n\}$ is a system of generators of the $A_1$-module
     $C_{W_n}(D)$.
\end{proof}

\begin{pr} Let $D_3$ be the basic Weitzenb$\ddot{o}$ck derivation $D_3$
on the polynomial ring $\mathbb K[x_1, x_2, x_3].$
Let us build a system of generators for the centralizer $C_{W_3(\mathbb K)}(D_3)$
as a module over  ${\rm Ker}D_3.$

The kernel $D_3$  is generated by isobaric polynomials $a_1=x_1, \ a_2=x_1x_3-(1/2)x_2^2$
(see, for example, \cite{Bedratyuk}). Note that
$${\widehat D}(x_1)=x_2, {\widehat D}(x_1x_3-(1/2)x_2^2)=0, {\widehat D}^2(x_1)=x_3, {\widehat D}^2(x_1x_3-(1/2)x_2^2)=0.$$
Then the $A_1$-modules $A_1$, $A_2$, $A_3$  have the  systems of generators
$$S_1=\{ 1\}, \ S_2=\{ 1, \ x_2\}, \ S_3=\{ 1, \ x_2, \ 2x_3, \ 4x_2^2\}, \ {\rm respectively.}$$
By Theorem \ref{Th3}, we obnain a system of generators of the centralizer $C_{W_3(\mathbb K)}(D_3)$ as a $A_1$-module
$$ \partial _3, \ x_1\partial _2+x_2\partial _3,  \  2x_1\partial_1+2x_2\partial_2+2x_3\partial_3, \ 8x_1^2\partial_1+8x_1x_2\partial_2+4x_2^2\partial_3.$$

\end{pr}

\begin{rem}
The systems of generators, constructed in Theorems \ref{Th2} and \ref{Th3} are not necessarily minimal.
\end{rem}

\section{Centralizers of locally nilpotent derivations}
In this section, we assume that  $A$ is a finitely generated domain over an arbitrary field $\mathbb K$ of characteristic zero. The next lemma contains some known properties of locally nilpotent derivations on such domains.
\begin{lemma}[see, for example, \cite{Freudenburg}] \label{slice}
     Let  $D \ne 0$ be a locally nilpotent derivation on a finitely generated domain $A$ over a field  $\mathbb{K}$ of characteristic zero and $C={\rm Ker}_{A}(D).$  Then
     \begin{enumerate}
          \item $C$ is a subalgebra of $A$ that is invariant under any  $T \in \Der_{\mathbb{K}}A$ commuting  with $D$;
          \item if  $D$ has a slice  $s$ in $A$, i.e. there exists  $s\in A$ such that  $D(s) = 1$, then  $A$  is the polynomial ring in  $s$ over $C$, that is $A \simeq C[s]$;
          \item if  $D$ does not have any slices in $A$, then there exist
 $h_0 \in C,\; h_1 \in A$ such that  the element
$s=h_1/h_0$ is a slice for the extension  $\overline D$ of $D$ on the  ring  $A[h_0^{-1}]$
and the kernel  $\overline{D}$ in $A[h_0^{-1}]$ coincides with  $C[h_0^{-1}]$.

     \end{enumerate}
\end{lemma}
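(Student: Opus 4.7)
The plan is to prove the three items in sequence, using only the Leibniz rule and induction on the $D$-order. Item (1) is immediate: the Leibniz identity $D(fg) = D(f)g + fD(g)$ together with $\mathbb{K}$-linearity shows that $C = \Ker D$ is a subalgebra, and if $[T,D] = 0$ and $f \in C$ then $D(T(f)) = T(D(f)) = 0$, giving $T(f) \in C$.

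For item (2) I would show $A = C[s]$ by induction on $n = \ord_D(f)$ for $f \in A$. The base case $n = 0$ gives $f \in C \subseteq C[s]$. For the inductive step, set $g = D^n(f)/n! \in C$; since $D(s) = 1$, an easy Leibniz computation gives $D^n(s^n) = n!$, hence $D^n(g s^n) = g \cdot n! = D^n(f)$, so that $f - g s^n$ has strictly smaller $D$-order and the inductive hypothesis finishes the argument. Transcendence of $s$ over $C$ follows by applying $D^m$ to a hypothetical nontrivial relation $\sum_{i=0}^m c_i s^i = 0$ with $c_m \ne 0$, which forces $m!\, c_m = 0$, a contradiction.

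For item (3) the first task is to exhibit an element of the form $D(a)$ lying in $C \setminus \{0\}$. Since $D \ne 0$ and $D$ is locally nilpotent, choose $b \in A$ with $D(b) \ne 0$, let $n = \ord_D(b) \ge 1$, and put $a = D^{n-1}(b)$; then $h_0 := D(a) = D^n(b) \in C \setminus \{0\}$ while $h_1 := a \in A$. Extending $D$ to $\overline{D}$ on the localization $A[h_0^{-1}]$ by the quotient rule, and using $h_0 \in \Ker D$, one obtains $\overline{D}(f/h_0^k) = D(f)/h_0^k$ for all $f \in A$ and $k \ge 0$. In particular, $s := h_1/h_0$ satisfies $\overline{D}(s) = 1$, so $s$ is a slice for $\overline{D}$. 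The inclusion $C[h_0^{-1}] \subseteq \Ker \overline{D}$ is clear; conversely, an arbitrary element of $A[h_0^{-1}]$ has the form $f/h_0^k$ with $f \in A$, and the vanishing $D(f)/h_0^k = 0$ forces $f \in C$, so $f/h_0^k \in C[h_0^{-1}]$.

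No serious obstacle is anticipated: the only points requiring a moment's attention are the order-drop computation in item (2) and the production of an element of $D$-order exactly one in item (3). Both are routine consequences of local nilpotence, and the whole argument amounts to the classical slice-theorem package together with its localized counterpart.
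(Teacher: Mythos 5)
Your proof is correct. Note, however, that the paper itself offers no proof of this lemma: it is stated as a known result with a reference to Freudenburg's book, so there is no in-paper argument to compare against. What you have written is essentially the standard proof from that reference --- item (2) is the classical slice theorem proved by descending induction on $\ord_D$ using the ``Taylor coefficient'' $D^n(f)/n!$, and item (3) produces a local slice by taking $h_1 = D^{n-1}(b)$ for an element $b$ of maximal-order behaviour, then localizing at $h_0 = D(h_1) \in \Ker D$. All the steps check out: the order-drop computation $D^n\bigl(f - \tfrac{D^n(f)}{n!}s^n\bigr) = 0$ is valid in characteristic zero, the transcendence of $s$ over $C$ follows from applying $D^m$ to a putative relation, and the identification $\Ker \overline{D} = C[h_0^{-1}]$ uses only that $h_0 \in \Ker D$ and that localization of a domain is injective. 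A self-contained proof like yours is a reasonable thing to supply where the paper merely cites the literature.
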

\begin{pro}\label{Proposition3}
     Let $D \ne 0$ be a locally nilpotent derivation of a domain  $A$ over $\mathbb K$ and  $C ={\rm Ker}D$ in  $A.$
     If  $D$ has a slice in $A,$ then
$C_{\Der_{\mathbb{K}} A}(D) \simeq C \rightthreetimes \Der_{\mathbb{K}}C,$
     where $C$ is considered as a Lie algebra over  $\mathbb{K}$ with zero multiplication and the action of  $\Der_{\mathbb{K}}C$ on $C$ is natural.
\end{pro}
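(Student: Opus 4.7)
The plan is to exploit Lemma \ref{slice}(2) to identify $A$ with the polynomial ring $C[s]$ where $s$ is a slice, so that $D$ acts as the formal partial derivative $\partial/\partial s$: explicitly, $D\bigl(\sum c_i s^i\bigr) = \sum i\,c_i s^{i-1}$ for $c_i \in C$. This reduces the problem to describing those $\mathbb{K}$-derivations of $C[s]$ that commute with $\partial/\partial s$.

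First I would define $\Phi \colon C_{\Der_{\mathbb{K}} A}(D) \to C \rightthreetimes \Der_{\mathbb{K}} C$ by $T \mapsto (T(s),\, T|_C)$. Both components are well-defined: by Lemma \ref{slice}(1) every $T$ commuting with $D$ preserves $C$, so $T|_C \in \Der_{\mathbb{K}} C$; and $D(T(s)) = T(D(s)) = T(1) = 0$, so $T(s) \in C$. Injectivity is immediate, since a derivation on $C[s]$ is determined by its values on $C$ and on $s$.

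For surjectivity, given $(c,\delta) \in C \times \Der_{\mathbb{K}} C$, I would define $T$ on $A = C[s]$ by
\[
T\!\left(\sum_i c_i s^i\right) = \sum_i \delta(c_i)\, s^i + c \sum_i i\, c_i\, s^{i-1}.
\]
A routine verification shows that $T$ is a $\mathbb{K}$-derivation with $T|_C = \delta$ and $T(s) = c$. To check that $[T,D]=0$, it suffices to evaluate both sides on $c_i s^i$: direct computation gives $T(D(c_i s^i)) = i\,\delta(c_i)s^{i-1} + i(i-1)c_i c\, s^{i-2} = D(T(c_i s^i))$, using $c,\delta(c_i) \in C$. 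Hence $\Phi$ is a bijection.

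Finally I would verify that $\Phi$ respects the Lie brackets. If $T_j \leftrightarrow (c_j,\delta_j)$ for $j=1,2$, then
\[
[T_1,T_2](s) = T_1(c_2) - T_2(c_1) = \delta_1(c_2) - \delta_2(c_1),
\]
because $c_1,c_2 \in C$ and each $T_j$ acts on $C$ as $\delta_j$; similarly $[T_1,T_2]|_C = [\delta_1,\delta_2]$, since each $T_j$ leaves $C$ invariant. These are precisely the components of the semidirect product bracket
\[
[(c_1,\delta_1),(c_2,\delta_2)] = \bigl(\delta_1(c_2) - \delta_2(c_1),\, [\delta_1,\delta_2]\bigr),
\]
with $C$ regarded as an abelian Lie algebra acted on by $\Der_{\mathbb{K}} C$. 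The only mildly delicate step is the construction of $T$ from $(c,\delta)$ and the check that it commutes with $D$; everything else is bookkeeping.
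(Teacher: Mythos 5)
Your proof is correct and follows essentially the same route as the paper: the paper splits $C_{\Der_{\mathbb{K}}A}(D)$ internally as $M \rightthreetimes L$ with $M = C\cdot D$ (the image of your first component $T\mapsto T(s)$) and $L=\{T : T(s)=0\}\simeq \Der_{\mathbb{K}}C$ (your second component), which is exactly the decomposition $T = T(s)D + (T - T(s)D)$ underlying your map $\Phi$. Your write-up is in fact more complete than the paper's, which leaves the surjectivity construction and the bracket verification to the reader.
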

\begin{proof}
     It follows from  Lemma \ref{slice} that $A \simeq C[s].$ Let $T$ be an arbitrary element of $C_{\Der_{\mathbb{K}} A}(D).$ Then the equality  $[T,D] = 0$ implies
      $T(s) \in C.$ Denote  $c_0 = T(s).$ It is easy to see that  $T - c_0 D \in C_{\Der_{\mathbb{K}}A}(D)$ and     $(T - c_0 D)(s) = 0$. Consider two subspaces $L$ and  $M$ of
     $C_{\Der_{\mathbb{K}} A}(D)$ of the form
     $$
     L = \{T \in C_{\Der_{\mathbb{K}} A}(D) \  \vert \ T(s) = 0\}, \
     M = \{T \in C_{\Der_{\mathbb{K}} A}(D) \  \vert \ T(C) = 0\}, {\rm respectively}.
     $$
     It is easy to check  that  $L$ is a subalgebra of  $C_{\Der_{\mathbb{K}} A}(D),$
      $M$ is an abelian ideal from $C_{\Der_{\mathbb{K}} A}(D)$  and   $M = C \cdot D.$ One can easily show that
     $C_{\Der_{\mathbb{K}}A}(D) \simeq  M \rightthreetimes L$ is a semidirect sum.  Note also that  $L \simeq \Der_{\mathbb{K}}C$ because any derivation of the algebra  $C$ over  $\mathbb{K}$ can be uniquely extended to a derivation $T$ of the algebra  $A$ with $T \in C_{\Der_{\mathbb{K}}A}(D)$
     (it suffices to put $T(s) = 0$).
\end{proof}
\begin{te}\label{Theorem3}
     Let  $A$ be a finitely generated domain over a field $\mathbb{K}$ of characteristic zero and  $D \ne 0$ be a locally nilpotent derivaton on  $A$.
     Then the centralizer $C_{\Der_{\mathbb{K}}A}(D)$ has rank  $n$ over $A,$ where
     $n = tr.\deg_{\mathbb{K}}A.$
\end{te}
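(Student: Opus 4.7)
The plan is to reduce to the situation where $D$ has a slice and then exhibit $n$ derivations in $C_{\Der_{\mathbb K}A}(D)$ that are linearly independent over the field of fractions $R$ of $A$. The upper bound $\rk_A C_{\Der_{\mathbb K}A}(D)\le n$ is free since $C_{\Der_{\mathbb K}A}(D)\subseteq\Der_{\mathbb K}A$ and $\rk_A\Der_{\mathbb K}A=\mathrm{tr.deg}_{\mathbb K}A=n$; the real content lies in the lower bound.

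I would first treat the case when $D$ has a slice $s\in A$. By Lemma~\ref{slice}(2), $A=C[s]$ with $C=\Ker D$, and $\mathrm{tr.deg}_{\mathbb K}C=n-1$. Therefore $\Der_{\mathbb K}C$ has rank $n-1$ over $C$, and one can pick derivations $\delta_1,\dots,\delta_{n-1}\in\Der_{\mathbb K}C$ which are linearly independent over the field of fractions of $C$. Proposition~\ref{Proposition3} (more precisely, the construction in its proof) extends each $\delta_i$ uniquely to a derivation $T_i\in C_{\Der_{\mathbb K}A}(D)$ by setting $T_i(s)=0$. Fixing a transcendence basis $t_1,\dots,t_{n-1}$ of $\mathrm{Frac}(C)$ over $\mathbb K$, so that $t_1,\dots,t_{n-1},s$ is a transcendence basis of $R$, the coordinate matrix of $D,T_1,\dots,T_{n-1}$ in the $R$-basis $\partial/\partial t_1,\dots,\partial/\partial t_{n-1},\partial/\partial s$ of $\Der_{\mathbb K}R$ is block-triangular, with an invertible $(n-1)\times(n-1)$ block $(\delta_i(t_j))$ and a $1$ in the $(n,n)$-entry coming from $D(s)=1$. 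This forces the $n$ derivations to be $R$-linearly independent, giving $\rk_A C_{\Der_{\mathbb K}A}(D)\ge n$.

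For the general case, I would use Lemma~\ref{slice}(3) to pass to the localization $B=A[h_0^{-1}]$, on which $\overline D$ has a slice and $\Ker_B\overline D=C[h_0^{-1}]$. Since $\mathrm{tr.deg}_{\mathbb K}B=n$ and $\mathrm{Frac}(B)=R$, the slice case already gives $\rk_B C_{\Der_{\mathbb K}B}(\overline D)=n$. Every $T\in C_{\Der_{\mathbb K}A}(D)$ extends uniquely to a $\mathbb K$-derivation of $B$ still commuting with $\overline D$, so $C_{\Der_{\mathbb K}A}(D)\subseteq C_{\Der_{\mathbb K}B}(\overline D)$ inside $\Der_{\mathbb K}R$. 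Conversely, if $S\in C_{\Der_{\mathbb K}B}(\overline D)$ and $y_1,\dots,y_m$ is a generating set for $A$ as a $\mathbb K$-algebra, then for $N$ sufficiently large $h_0^N S$ carries each $y_j$ into $A$ and hence lies in $\Der_{\mathbb K}A$; the identity $[h_0^N S,D]=-D(h_0^N)S+h_0^N[S,D]=0$ (using $h_0\in\Ker D$) shows $h_0^N S\in C_{\Der_{\mathbb K}A}(D)$. Consequently $R\cdot C_{\Der_{\mathbb K}A}(D)=R\cdot C_{\Der_{\mathbb K}B}(\overline D)$, and the two ranks over $R$ coincide, completing the proof.

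The step I expect to be the main obstacle is the localization argument: one has to check simultaneously that $h_0^N S$ lands in $\Der_{\mathbb K}A$ (which uses finite generation of $A$) and that it remains in the centralizer of $D$ (which uses $h_0\in\Ker D$). Everything else is bookkeeping on top of Proposition~\ref{Proposition3} and the classical equality $\rk_C\Der_{\mathbb K}C=\mathrm{tr.deg}_{\mathbb K}C$.
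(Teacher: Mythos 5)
Your overall architecture coincides with the paper's: settle the slice case by producing $n-1$ derivations killing $s$ that come from the kernel $C$, then reduce the general case by localizing at $h_0$ and clearing denominators with powers of $h_0$ (your verification that $[h_0^N S,D]=-D(h_0^N)S+h_0^N[S,D]=0$ is exactly the point, and you actually spell out the two-sided comparison $R\,C_{\Der A}(D)=R\,C_{\Der B}(\overline D)$ more explicitly than the paper does).

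There is, however, one genuine gap, and it sits at the step you label as classical. You take for granted that $\Der_{\mathbb K}C$ has rank $n-1$ over $C$, i.e.\ that there exist $n-1$ derivations \emph{of $C$ itself} that are independent over $\mathrm{Frac}(C)$. The equality $\rk\Der_{\mathbb K}C=\mathrm{tr.deg}_{\mathbb K}C$ is standard for \emph{finitely generated} domains, but $C=\Ker D$ need not be finitely generated (kernels of locally nilpotent derivations furnish the known counterexamples to Hilbert's fourteenth problem), so you cannot clear denominators inside $C$ against a finite generating set of $C$. This is precisely the difficulty the paper's proof is built to avoid: it takes the partial derivatives $\partial/\partial c_i$ with respect to a transcendence basis $c_1,\dots,c_{n-1}$ of $C$, extends them to $A[S^{-1}]=\mathrm{Frac}(C)[s]$ by $\partial/\partial c_i(s)=0$, and then multiplies by a single $d\in C$ chosen so that $d\,\partial/\partial c_i$ maps a finite generating set of $A$ (not of $C$) into $A$; finite generation of $A$ is a hypothesis of the theorem, so this is legitimate, and the resulting derivations commute with $D$ and restrict to derivations of $C$. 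Your argument becomes complete if you replace the abstract choice of $\delta_1,\dots,\delta_{n-1}\in\Der_{\mathbb K}C$ by this explicit construction (or first extend the $\partial/\partial c_i$ to $A[S^{-1}]$ and only then clear denominators); the rest of your proof, including the block-triangular independence argument and the treatment of the no-slice case, is correct as written.
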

\begin{proof}
     First, let the derivation $D$ have a slice  $s$ in $A$.
     It follows from Lemma \ref{slice} that  ${\rm tr.deg}_{\mathbb K}C = n-1.$
     Choose any transcendence basis  $c_1,\dots c_{n-1}$ of   $C$ (over the field   $\mathbb{K}$) and consider partial derivations $\frac{\partial}{\partial c_i},\; i=1,\dots,n-1$ on the subalgebra
     $\mathbb{K}[c_1,\dots,c_{n-1}] \subseteq C.$
     It is  known that these derivations can be uniquely extended on the field of fractions      $\mathrm{Frac}(C)$ (we use the same symbols  for extended
     $\frac{\partial}{\partial c_i}$ when no confusion can arise). Let $S = C\setminus\{0\}$ and let
     $A[S^{-1}]$ be the ring of fractions. Since  $A = C[s],$ we have   $A[S^{-1}] = \mathrm{Frac}(C)[s].$

     Further, let us extend the derivations
     $\frac{\partial}{\partial c_i},\; i=1,\dots,n-1$
     on $A[S^{-1}]$, putting
     $\frac{\partial}{\partial c_i}(s) = 0,\; i=1,\dots,n-1$.
     Let  $a_1,\dots,a_k$ be a system of generators of the algebra  $A.$  Denote
      $\frac{\partial}{\partial c_i}(a_j) = e_{ij}/d_{ij}$
     for some  $e_{ij} \in A, \, d_{ij} \in S = C \setminus \{0\},$ and
     put  $d = \prod_{i,j} d_{ij}.$ Then we obviously have $d \in C$ and
     $d \frac{\partial}{\partial c_i}(a_j) \in A,\;
     i=1,\dots,n-1,\, j=1,\dots,k.$. The latter means that
     $d\frac{\partial}{\partial c_i}(A) \subseteq A,\; i=1,\dots,n-1.$ It is easy to see that the derivations  $d \frac{\partial}{\partial c_i}$ commute with the derivation  $D$ on  $A$ (recall  that we extended  $\frac{\partial}{\partial c_i}$
     on $A[S^{-1}]$). One can easily show that the derivations
     $D, d\frac{\partial}{\partial c_i}, i=1, \ldots ,n-1 $ on the domain  $A$ are linearly independent over  $R$ and form a basis of the vector space  $R C_{\Der A}(D)$ over the ring      $R = \mathrm{Frac}(A)$. The latter means that  $C_{\Der A}(D)$ has rank  $n$ over  $A.$

     Now let  $D$ do not have any slices in $A.$ Then, by Lemma  \ref{slice}, there exists a local slice  $h_1 \in A$ such that  $D(h_1) = h_0 \ne 0$ and
     $s =h_1/h_0$ is a slice for the extension  $\overline{D}$ of the derivation      $D$ on the ring of fractions  $A[h_0^{-1}].$

     Every derivation  $T$ of the ring  $C = {\rm Ker}D$ can be uniquly lifted to the derivation  $\overline{T}$ of the ring  $A[h_0^{-1}]$ by putting  $\overline{T}(s) = 0$
     (because  $A[h_0^{-1}] = C[h_0^{-1}][s]$ and the case of slice was considered above). It is easy to check that
     $[\overline{T}, \overline{D}] = 0$, where $\overline{D}$ is the extension of
     $D$ on the ring  $A[h_0^{-1}]$
     (clearly, $\overline D$ is a locally nilpotent derivation and      $\Ker \overline{D} = C[h_0^{-1}]$). For the system of generators
     $a_1,\dots,a_k$ of the algebra $A$ we have
     $\overline{T}(a_i) = {b_i}/(h_0^{k_i})$ for some
     $b_i \in A$ and $k_i \ge 0.$
     Denote  $k = \max k_i$. Then  $h_0^k \overline{T}$ commutes with      $\overline{D}$ and the inclusion
     $h_0^k \overline{T}(A) \subseteq A$ holds.
     As the multiplication of derivations from $C_{\Der A}(D)$ by the powers of $h_0$ does not change the rank of $C_{\Der A}(D)$ over $R = \mathrm{Frac}(A)$, we can repeat considerations from the case when there exists a slice of $D$ in $A$ and obtain the desired equality $\dim_R RC_{\Der A}(D) = n$.

\end{proof}

\end{document}